\theoremstyle{plain}
  \theoremstyle{plain}
  \newtheorem*{thm*}{Theorem}
 \theoremstyle{definition}
 \newtheorem*{defn*}{Definition}
  \theoremstyle{plain}
  \newtheorem*{lem*}{Lemma}
\begin{document}

\title{The triangle and the open triangle}

\author{Gady Kozma}
\begin{abstract}
We show that for percolation on any transitive graph, the triangle
condition implies the open triangle condition.
\end{abstract}
\maketitle

\section{Introduction}

Let $G$ be a vertex-transitive%
\footnote{A vertex-transitive graph, and any other notion not specifically defined,
may be found in Wikipedia.%
} connected graph, and let $p$ be some number in $[0,1]$. We say
that $p$-percolation on $G$ satisfies the triangle condition if
for some $v\in G$ \begin{equation}
\sum_{x,y\in G}\mathbb{P}(v\leftrightarrow x)\mathbb{P}(x\leftrightarrow y)\mathbb{P}(y\leftrightarrow v)<\infty.\label{eq:triag}\end{equation}
where $x\leftrightarrow y$ implies that there exists an open path
between $x$ and $y$. Here and below we abuse notations by denoting
{}``$v$ is a vertex of $G$'' by $v\in G$. Of course, by transitivity,
the sum is in fact independent of $v$. This note is far too short
to explain the importance of the triangle condition. Suffices to say
that it the triangle condition holds at the \emph{critical }$p$,
then many exponents take their \emph{mean-field} values. See \cite{AN84,N87,BA91,KN09}
for corollaries of the triangle condition. On the other hand, the
triangle condition holds in many interesting cases, see \cite{HS90,HHS08}
for the graphs $\mathbb{Z}^{d}$ with $d$ sufficiently large, and
\cite{S01,S02,K} for various other transitive graphs. See \cite{G99}
or \cite{BR06} for a general introduction to percolation.

In many applications the triangle condition (\ref{eq:triag}) is not
so convenient to use. One instead uses the \emph{open }triangle condition,
which states that\[
\lim_{R\to\infty}\max_{w\not\in B(v,R)}\sum_{x,y\in G}\mathbb{P}(v\leftrightarrow x)\mathbb{P}(x\leftrightarrow y)\mathbb{P}(y\leftrightarrow w)=0,\]
where $B(v,R)$ stands for the ball around $v$ with radius $R$ in
the graph (or shortest path) distance. Clearly, the open triangle
condition implies the (closed) triangle condition (recall that if
$y$ and $y'$ are neighbors in the graph then $\mathbb{P}(x\leftrightarrow y)\ge c\mathbb{P}(x\leftrightarrow y')$
for some constant $c$ independent of $x$, $y$ and $y'$). The contents
of lemma 2.1 of Barsky \& Aizenman \cite{BA91} is the reverse implication.
The proof in \cite{BA91} is specific to the graph $\mathbb{Z}^{d}$
as it uses the Fourier transform of the function $f(x)=\mathbb{P}(\vec{0}\leftrightarrow x)$.
The purpose of this note is to generalize this to any transitive graph,
namely
\begin{thm*}
Let $G$ be a vertex-transitive graph and let $p\in[0,1]$. Assume
$G$ satisfies the triangle condition at $p$. Then $G$ satisfies
the open triangle condition at $p$.
\end{thm*}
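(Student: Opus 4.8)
Throughout write $\tau(x,y)=\mathbb{P}(x\leftrightarrow y)$ and, for $v,w\in G$, put $\nabla(v,w)=\sum_{x,y\in G}\tau(v,x)\tau(x,y)\tau(y,w)$, so that the triangle condition reads $\nabla(v,v)<\infty$ and the open triangle condition reads $\lim_{R\to\infty}\max_{w\notin B(v,R)}\nabla(v,w)=0$. By transitivity $\nabla(v,v)$ is a constant, call it $\nabla$. The plan is to show $\nabla(v,w)\to0$ uniformly as $d(v,w)\to\infty$, which is exactly the open triangle condition. The one structural fact I would isolate at the start is that $\tau$ is a positive-definite kernel: from the cluster decomposition $\mathbf{1}_{\{x\leftrightarrow y\}}=\sum_{\mathcal{C}}\mathbf{1}_{\{x\in\mathcal{C}\}}\mathbf{1}_{\{y\in\mathcal{C}\}}$ (the sum over the open clusters $\mathcal{C}$ of the configuration), taking expectations gives, for any finitely supported $a\colon G\to\mathbb{R}$,
\[
\sum_{x,y}a(x)a(y)\tau(x,y)=\mathbb{E}\sum_{\mathcal{C}}\Big(\sum_{x\in\mathcal{C}}a(x)\Big)^{2}\ge0.
\]
Hence $\langle a,b\rangle:=\sum_{x,y}a(x)\tau(x,y)b(y)$ is a positive semidefinite symmetric form and obeys Cauchy--Schwarz, $\langle a,b\rangle\le\langle a,a\rangle^{1/2}\langle b,b\rangle^{1/2}$ (extended to all nonnegative $a,b$ of finite norm by monotone convergence). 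The gain from this language is that $\nabla(v,w)=\langle\tau(v,\cdot),\tau(w,\cdot)\rangle$ and $\|\tau(v,\cdot)\|^{2}:=\langle\tau(v,\cdot),\tau(v,\cdot)\rangle=\nabla$.

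Next I would record two quantitative decay inputs. First, keeping only the terms with $y=v$ in $\nabla$ and using $\tau(v,v)=1$ gives $\sum_{x}\tau(v,x)^{2}\le\nabla$, so $\tau(v,\cdot)\in\ell^{2}(G)$; since an $\ell^{2}$ function exceeds any threshold at only finitely many vertices, and a finite vertex set in a connected graph has finite diameter, transitivity yields that $\delta(m):=\sup_{d(x,y)\ge m}\tau(x,y)\to0$ as $m\to\infty$. Second, since $\nabla<\infty$, the truncation tail $\|\mathbf{1}_{B(v,N)^{c}}\tau(v,\cdot)\|^{2}=\sum_{x,x'\notin B(v,N)}\tau(v,x)\tau(x,x')\tau(x',v)$ tends to $0$ as $N\to\infty$, and by transitivity the same holds at any $w$ with the same rate.

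Now fix $\epsilon>0$ and choose $N$ with $\|b_N\|<\epsilon$, where $a_N:=\mathbf{1}_{B(v,N)}\tau(v,\cdot)$ and $b_N:=\mathbf{1}_{B(v,N)^{c}}\tau(v,\cdot)$, defining $a_N^{w},b_N^{w}$ by the same recipe at $w$. Expanding $\nabla(v,w)=\langle a_N+b_N,\,a_N^{w}+b_N^{w}\rangle$ and bounding by Cauchy--Schwarz the three terms containing a factor $b$ contributes at most $2\epsilon\sqrt{\nabla}+\epsilon^{2}$ (using $\|a_N\|,\|a_N^{w}\|\le\sqrt{\nabla}$ and $\|b_N^{w}\|=\|b_N\|<\epsilon$). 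For the main term, every $x\in B(v,N)$ and $y\in B(w,N)$ satisfy $d(x,y)\ge d(v,w)-2N$, so
\[
\langle a_N,a_N^{w}\rangle=\sum_{x\in B(v,N)}\sum_{y\in B(w,N)}\tau(v,x)\tau(x,y)\tau(w,y)\le\delta\big(d(v,w)-2N\big)\,M_N^{2},
\]
where $M_N:=\sum_{x\in B(v,N)}\tau(v,x)<\infty$ (a finite ball, as $G$ is locally finite). Thus $\max_{w\notin B(v,R)}\nabla(v,w)\le\delta(R-2N)M_N^{2}+2\epsilon\sqrt{\nabla}+\epsilon^{2}$; letting $R\to\infty$ with $N$ fixed kills the first term, and then $\epsilon\to0$ gives the open triangle condition.

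The one genuine obstacle is that at criticality $\tau(v,\cdot)$ need not be summable (the expected cluster size is typically infinite), so truncating a single factor of $\nabla(v,w)$ is hopeless — precisely the difficulty that the Fourier argument of \cite{BA91} sidesteps on $\mathbb{Z}^{d}$. The device replacing Fourier analysis here is the positive-definiteness of $\tau$: it lets me control the truncation error in the triangle semi-norm $\|\cdot\|$, whose square is a tail of the \emph{convergent} sum $\nabla$, rather than in the (infinite) $\ell^{1}$ norm. Every other step is soft, using only transitivity and connectedness.
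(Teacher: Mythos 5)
Your proof is correct, and it takes a genuinely different route from the paper's. The paper makes the heuristic rigorous with operator theory: it decomposes $B=\sum_{n}B_{n}$ according to cluster size ($B_{n}(v,w)=\mathbb{P}(v\leftrightarrow w,\,|\mathcal{C}(v)|=n)$), shows each $B_{n}$ is a bounded positive operator, extracts $S_{n}=\sqrt{B_{n}}$ from the spectral theorem so that $Q(v,w)=\sum_{n}\langle S_{n}B\mathbf{1}_{v},S_{n}B\mathbf{1}_{w}\rangle$, and finishes with an almost-orthogonality lemma (a fixed $l^{2}$ function is nearly orthogonal to its images under automorphisms carrying $v$ far away). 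You use no operators at all: the Aizenman--Newman cluster identity enters only at the level of the quadratic form $\langle a,b\rangle=\sum a(x)\tau(x,y)b(y)$, giving Cauchy--Schwarz for nonnegative arguments by monotone convergence, and your decomposition is spatial (truncating $\tau(v,\cdot)$ to $B(v,N)$) rather than by cluster size. The key observation --- that the truncation error is measured in the $\tau$-weighted seminorm, whose square is a tail of the \emph{convergent} triangle sum --- is precisely what circumvents the domain/unboundedness obstruction the paper describes after its heuristic; the main term then needs only the elementary decay $\sup_{d(x,y)\ge m}\tau(x,y)\to0$, a consequence of $\tau(v,\cdot)\in l^{2}$. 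Your argument is more elementary (no spectral theorem, no bounded-operator decomposition) and somewhat more quantitative, at the cost of the paper's stated secondary aim of showcasing spectral methods. The one hypothesis you use tacitly in asserting $M_{N}<\infty$ is local finiteness of $G$; this is harmless, since for $p>0$ the triangle condition already forces $\sum_{x}\tau(v,x)^{2}<\infty$, which fails at a vertex of infinite degree, while for $p=0$ the theorem is trivial.
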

This result is not particularly important. For example, in \cite{S01,S02}
the author simply circumvents the problem by working directly with
the open triangle condition. The advantage of making the triangle
condition {}``the'' marker for mean-field behavior is mostly aesthetic.
The real reason for the existance of this note is to demonstrate an
application of operator theory, specifically of spectral theory, to
percolation. Operator theory is a fantastically powerful tool whose
absence from the percolation scene is behind many of the difficulties
one encounters. I aim to remedy this situation, even if by very little.

I wish to thank Asaf Nachmias for pointing out some omissions in a
draft version of the paper, and Michael Aizenman for an intersting
discussion of alternative proof approaches.

\section{The proof}

Before starting the proof proper, let us make a short heuristic argument.
Define the infinite matrix

\begin{equation}
B(v,w)=\mathbb{P}(v\leftrightarrow w)\label{eq:defB}\end{equation}
where in the notation we assume that $v\leftrightarrow v$ always
so $B(v,v)=1$. By \cite{AN84} $B$, considered as an (unbounded)
operator on $l^{2}(G)$ is a positive operator. Hence the same holds
for

\begin{equation}
Q(v,w)=\sum_{x,y}B(v,x)B(x,y)B(y,w)\label{eq:defQ}\end{equation}
which is just $B^{3}$ (as an infinite matrix or as an unbounded operator).
It is possible to take the square root of any positive operator, so
denote $S=\sqrt{Q}$. We get\[
Q(v,w)=\langle Q\mathbf{1}_{v},\mathbf{1}_{w}\rangle=\langle S\mathbf{1}_{v},S\mathbf{1}_{w}\rangle\]
where $\mathbf{1}_{v}$ is the element of $l^{2}(G)$ defined by \[
\mathbf{1}_{v}(x)=\begin{cases}
1 & v=x\\
0 & v\ne x.\end{cases}\]
Hence the triangle condition $Q(v,v)<\infty$ implies that $||Sv||<\infty$.
But $S$ is invariant to the automorphisms of $G$ (as a root of $Q$
which is invariant to them) so $S\mathbf{1}_{w}$ is a map of $S\mathbf{1}_{v}$
under an automorphism $\varphi$ taking $v$ to $w$. But any vector
in $l^{2}$ is almost orthogonal to sufficiently far away {}``translations''
(namely, the automorphisms of $G$), so $\langle S\mathbf{1}_{v},S\mathbf{1}_{w}\rangle\to0$
as the graph distance of $v$ and $w$ goes to $\infty$, as required.

Why is this even a heuristic and not a full proof? Because of the
benign looking expression $\langle Q\mathbf{1}_{v},\mathbf{1}_{w}\rangle$
which is in fact meaningless. $Q$ is an unbounded operator and hence
it cannot be applied to any vector in $l^{2}(G)$, and there is nothing
guaranteeing that $\mathbf{1}_{v}$ will be in its domain. For example,
in a sufficiently spread-out lattice in $\mathbb{R}^{d}$ one has
that $\mathbb{P}(x\leftrightarrow y)\approx|x-y|^{2-d}$ \cite{HHS03}
which gives with a simple calculation that the triangle condition
holds whenever $d>6$ while $Q\mathbf{1}_{v}\in l^{2}$ only when
$d>12$.

The proof below circumvents this problem by decomposing $B$ into
a sum of positive bounded operators using specific properties of $B$.
Somebody more versed in the theory of unbounded operators might have
constructed a more direct proof. 

We start the proof proper with
\begin{defn*}
Let $\varphi$ be an automorphism of the graph $G$. We define the
isometry $\Phi=\Phi_{\varphi}$ of $l^{2}(G)$ corresponding to $\varphi$
by\begin{equation}
(\Phi(f))(v)=f(\varphi^{-1}(v)).\label{eq:defPhi}\end{equation}

\end{defn*}
It is easy to check that $\Phi\mathbf{1}_{v}=\mathbf{1}_{\varphi(v)}$
and that the support of $\Phi f$ is $\varphi($the support of $f)$.
\begin{lem*}
Let $f\in l^{2}(G)$, let $v\in G$ and let $\delta>0$. Then there
exists an $R=R(f,\delta,v)$ such that for any $w\not\in B(v,R)$
and any automorphism $\varphi$ of $G$ taking $v$ to $w$ one has\begin{equation}
|\langle\Phi_{\varphi}f,f\rangle|<\delta\label{eq:almost}\end{equation}
\end{lem*}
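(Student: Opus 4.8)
The plan is to reduce to the case of a finitely supported $f$ by a density argument, after which the statement becomes an elementary consequence of the fact that graph automorphisms preserve the graph distance. First I would fix $\epsilon>0$, to be specified later in terms of $\delta$ and $\|f\|$, and use the density of finitely supported functions in $l^{2}(G)$ to choose a $g$ with finite support satisfying $\|f-g\|<\epsilon$. Since $\Phi_{\varphi}$ is an isometry, writing
\[
\langle\Phi_{\varphi}f,f\rangle-\langle\Phi_{\varphi}g,g\rangle=\langle\Phi_{\varphi}f,f-g\rangle+\langle\Phi_{\varphi}(f-g),g\rangle
\]
and applying Cauchy--Schwarz bounds the right-hand side by $\epsilon\|f\|+\epsilon\|g\|\le\epsilon(2\|f\|+\epsilon)$, uniformly in $\varphi$. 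Hence it suffices to arrange that $\langle\Phi_{\varphi}g,g\rangle$ vanish.

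For the finitely supported $g$, let $\rho$ be large enough that the support of $g$ is contained in the ball $B(v,\rho)$; this is possible because the support is finite and $G$ is connected. The key observation is that any automorphism $\varphi$ taking $v$ to $w$ is an isometry of the graph metric, so for every $x$ in the support of $g$ one has $d(w,\varphi(x))=d(\varphi(v),\varphi(x))=d(v,x)\le\rho$. Recalling from the remark after the definition that the support of $\Phi_{\varphi}g$ equals $\varphi(\text{supp }g)$, I conclude $\text{supp}(\Phi_{\varphi}g)\subseteq B(w,\rho)$. Therefore, as soon as $d(v,w)>2\rho$ the balls $B(v,\rho)$ and $B(w,\rho)$ are disjoint, the supports of $g$ and $\Phi_{\varphi}g$ do not meet, and $\langle\Phi_{\varphi}g,g\rangle=0$.

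Combining, I would set $R=2\rho$ and choose $\epsilon$ small enough that $\epsilon(2\|f\|+\epsilon)<\delta$; then for any $w\not\in B(v,R)$ and any $\varphi$ taking $v$ to $w$, inequality (\ref{eq:almost}) follows. I do not expect a genuine obstacle here: the only points requiring care are that both the approximation bound and the vanishing of $\langle\Phi_{\varphi}g,g\rangle$ be uniform over all admissible $\varphi$, and both are --- the former because $\Phi_{\varphi}$ is always an isometry, the latter because the support estimate uses only the distance-preserving property shared by every such $\varphi$. This lemma is simply the rigorous incarnation of the heuristic remark that any vector in $l^{2}$ is almost orthogonal to its sufficiently far translations, and the entire content lies in the reduction to finite support rather than in any analytic estimate.
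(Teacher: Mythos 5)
Your proposal is correct and follows essentially the same route as the paper: approximate $f$ by a finitely supported function (the paper takes the explicit truncation $f\cdot\mathbf{1}_{A}$), control the error terms by Cauchy--Schwarz using that $\Phi_{\varphi}$ is an isometry, and observe that since $\varphi$ preserves the graph distance the supports of the truncation and its image are disjoint once $d(v,w)$ exceeds twice the radius of the support. The only (harmless) difference is cosmetic: the paper sets $R=2\max_{x\in A}d(v,x)+1$, the extra $+1$ guarding against the boundary case $d(v,w)=2\rho$, which you may want to add depending on whether $B(v,R)$ is read as the open or closed ball.
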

\begin{proof}
Let $A\subset G$ be some finite set of vertices such that\[
\sqrt{\sum_{v\not\in A}|f(v)|^{2}}<\frac{1}{3||f||}\delta.\]
Write now\[
f=f_{\mathrm{loc}}+f_{\mathrm{glob}}\mbox{ where }f_{\mathrm{loc}}=f\cdot\mathbf{1}_{A}.\]
By the definition of $A$, $||f_{\mathrm{glob}}||<\frac{1}{3||f||}\delta$,
and so by Cauchy-Schwarz,\begin{equation}
|\langle\Phi f,f\rangle|\le|\langle\Phi f_{\mathrm{loc}},f_{\mathrm{loc}}\rangle|+2||f_{\mathrm{glob}}||\cdot||f_{\mathrm{loc}}||+||f_{\mathrm{glob}}||^{2}<|\langle\Phi f_{\mathrm{loc}},f_{\mathrm{loc}}\rangle|+\delta.\label{eq:Phifloc}\end{equation}
Define now \[
R=2\max_{x\in A}d(v,x)+1.\]
To see (\ref{eq:almost}), let $w$ and $\varphi$ be as above. We
get, for any $x\in A$, \[
d(\varphi(x),v)\ge d(v,w)-d(\varphi(x),w).\]
Now, $d(\varphi(x),w)=d(\varphi(x),\varphi(v))=d(x,v)<\frac{1}{2}R$
because $\varphi$ is an automorphism of $G$. Hence we get\[
d(\varphi(x),v)>R-{\textstyle \frac{1}{2}}R\]
implying that $\varphi(x)\not\in A$ as it is too far. In other words,
$A\cap\varphi(A)=\emptyset$ which implies that $\langle\Phi_{\varphi}f_{\mathrm{loc}},f_{\mathrm{loc}}\rangle=0$.
With (\ref{eq:Phifloc}), the lemma is proved.
\end{proof}

\begin{proof}
[Proof of the theorem]We will not keep $p$ in the notations as it
does not change throughout the proof. For every $n\in\mathbb{N}$
and every $v,w\in G$, let $B_{n}(v,w)$ be defined by\[
B_{n}(v,w)=\mathbb{P}(v\leftrightarrow w,\,|\mathcal{C}(v)|=n)\]
where $\mathcal{C}(v)$ is the cluster of $v$ i.e.\ the set of vertices
connected to $v$ by open paths, and $|\mathcal{C}(v)|$ is the number
of vertices in $\mathcal{C}(v)$. Clearly $B_{n}(v,w)\ge0$ and \begin{equation}
B(v,w)=\sum_{n=1}^{\infty}B_{n}(v,w)\label{eq:BBn}\end{equation}
where $B$ is as above (\ref{eq:defB}). Therefore we may write\begin{align}
Q(v,w) & \stackrel{(\ref{eq:defQ})}{=}\sum_{x,y}B(v,x)B(x,y)B(y,w)\stackrel{(\ref{eq:BBn})}{=}\sum_{x,y}B(v,x)\left(\sum_{n=1}^{\infty}B_{n}(x,y)\right)B(y,w)=\nonumber \\
 & \stackrel{\hphantom{(\ref{eq:defQ})}}{=}\sum_{n=1}^{\infty}\sum_{x,y}B(v,x)B_{n}(x,y)B(y,w)\label{eq:QsumBBnB}\end{align}
where the change of order of summation in the last equality is justified
since all terms are positive. Now, the vector \[
B\mathbf{1}_{w}=\left(B(y,w)\right)_{y\in G}\]
is in $l^{2}(G)$ because\[
\sum_{y}B(y,w)^{2}\le\sum_{y,x}B(w,y)B(y,x)B(x,w)<\infty.\]
Further, each $B_{n}$, considered as an operator on $l^{2}(G)$ is
bounded, because the sum of the (absolute values of the) entries in
each row and each column is finite. From this we conclude that $B_{n}B\mathbf{1}_{w}\in l^{2}(G)$
and we may present the sum in (\ref{eq:QsumBBnB}) in an $l^{2}$
notation as\begin{equation}
Q(v,w)=\sum_{n=1}^{\infty}\langle B_{n}B\mathbf{1}_{v},B\mathbf{1}_{w}\rangle.\label{eq:QBnB,B}\end{equation}
Next we employ the argument of Aizenman \& Newman \cite{AN84} to
show that $B_{n}$ is a positive operator. This means that $B_{n}(v,w)=B_{n}(w,v)$
(which is obvious) and that $\langle B_{n}f,f\rangle\ge0$ for any
(real-valued) $f\in l^{2}$. It is enough to verify this for $f$
with finite support. But in this case we can write\begin{align*}
\langle B_{n}f,f\rangle & =\sum_{v,w}f(v)f(w)\mathbb{P}(v\leftrightarrow w,\,|\mathcal{C}(v)|=n)=\\
(*)\qquad & =\mathbb{E}\Big(\sum_{v,w}f(v)f(w)\mathbf{1}_{\{v\leftrightarrow w,|\mathcal{C}(v)|=n\}}\Big)=\\
 & =\mathbb{E}\Big(\sum_{\mathcal{C}\:\mathrm{s.t.}\:|\mathcal{C}|=n}\;\sum_{v,w\in\mathcal{C}}f(v)f(w)\Big)=\mathbb{E}\Big(\sum_{\mathcal{C}\:\mathrm{s.t.}\:|\mathcal{C}|=n}\Big(\sum_{v\in\mathcal{C}}f(v)\Big)^{2}\Big)\ge0.\end{align*}
where $(*)$ is where we used the fact that $f$ has finite support
to justify taking the expectation out of the sum. The notation $\mathbf{1}_{E}$
here is for the indicator of the event $E$. Thus $B_{n}$ is positive.

We now apply the spectral theorem for \emph{bounded }positive operators
to take the square root of $B_{n}$. See \cite{EMT04}, lemma 6.3.5
for the specific case of taking the root of a positive operator and
chapter 7 for general spectral theory. Denote $S_{n}=\sqrt{B_{n}}$.
This implies, of course, that $S_{n}^{2}=B_{n}$ but also that $S_{n}$
is positive and that it commutes with any operator $\Phi$ that commutes
with $B_{n}$.

Returning to (\ref{eq:QBnB,B}) we now write\begin{equation}
Q(v,w)=\sum_{n=1}^{\infty}\langle S_{n}^{2}B\mathbf{1}_{v},B\mathbf{1}_{w}\rangle=\sum_{n=1}^{\infty}\langle S_{n}B\mathbf{1}_{v},S_{n}B\mathbf{1}_{w}\rangle.\label{eq:QSBSB}\end{equation}
The fact that $Q(v,v)<\infty$ therefore implies that\begin{equation}
\sum_{n=1}^{\infty}||S_{n}B\mathbf{1}_{v}||^{2}<\infty.\label{eq:sumSnB}\end{equation}
Our only use of the triangle condition.

Fix now some $\epsilon>0$. By (\ref{eq:sumSnB}) we can find some
$N$ such that \begin{equation}
\sum_{n=N+1}^{\infty}||S_{n}B\mathbf{1}_{v}||^{2}<\tfrac{1}{2}\epsilon.\label{eq:N+1inf}\end{equation}
Since $S_{n}B\mathbf{1}_{v}\in l^{2}(G)$, we can use the lemma, and
we use it with \[
f_{\mathrm{lemma}}=S_{n}B\mathbf{1}_{v}\quad v_{\mathrm{lemma}}=v\qquad\delta_{\mathrm{lemma}}=\frac{\epsilon}{2N}\:.\]
We get some $R_{n}$ such that for any $\varphi$ taking $v$ outside
of $B(v,R_{n})$, \[
|\langle\Phi_{\varphi}S_{n}B\mathbf{1}_{v},S_{n}B\mathbf{1}_{v}\rangle|\le\frac{\epsilon}{2N}\:.\]
Some standard abstract nonsense shows that the invariance of $B_{n}$
i.e.\ the fact that $B_{n}(x,y)=B_{n}(\varphi(x),\varphi(y))$ implies
that $B_{n}\Phi=\Phi B_{n}$. Hence also $S_{n}\Phi=\Phi S_{n}$ so
\[
\langle\Phi S_{n}B\mathbf{1}_{v},S_{n}B\mathbf{1}_{v}\rangle=\langle S_{n}B\Phi\mathbf{1}_{v},S_{n}B\mathbf{1}_{v}\rangle=\langle S_{n}B\mathbf{1}_{\varphi(v)},S_{n}B\mathbf{1}_{v}\rangle.\]
Define $R=\max\{R_{1},\dotsc,R_{N}\}$. We get, for every $w\not\in B(v,R)$,
\begin{equation}
\sum_{n=1}^{N}\langle S_{n}B\mathbf{1}_{v},S_{n}B\mathbf{1}_{w}\rangle\le N\delta=\tfrac{1}{2}\epsilon.\label{eq:sum1N}\end{equation}
(\ref{eq:N+1inf}) takes care of the other sum,\begin{align}
\sum_{n=N+1}^{\infty}\langle S_{n}B\mathbf{1}_{v},S_{n}B\mathbf{1}_{w}\rangle & \leq\sum_{n=N+1}^{\infty}||S_{n}B\mathbf{1}_{v}||\cdot||S_{n}B\mathbf{1}_{w}||=\nonumber \\
 & =\sum_{n=N+1}^{\infty}||S_{n}B\mathbf{1}_{v}||^{2}<\tfrac{1}{2}\epsilon.\label{eq:sumNinf}\end{align}
We are done. We get that for any $w\not\in B(v,R)$,\[
Q(v,w)\stackrel{(\ref{eq:QSBSB})}{=}\sum_{n=1}^{\infty}\langle S_{n}B\mathbf{1}_{v},S_{n}B\mathbf{1}_{w}\rangle\stackrel{(\ref{eq:sum1N},\ref{eq:sumNinf})}{\le}\epsilon\]
as required.
\end{proof}
\noindent \emph{Closing remark.} Comparing the proof here to that
of Barsky \& Aizenman \cite{BA91}, it seems as if there is something
missing in their argument. This is not true. Justifying the change
of order of summation in \cite{BA91} is completely standard --- for
example, by examining Cesàro sums --- and does not deserve any special
remark.

\end{document}